\documentclass[reqno,12pt,a4paper]{amsart}

\usepackage[shortlabels]{enumitem}
\usepackage{amstext,amsmath,amsthm,amsfonts,amssymb,amscd}
\usepackage{latexsym,mathrsfs,dsfont,euscript}

\usepackage{marginnote}
\usepackage{geometry} 
\usepackage{fullpage}

\usepackage{multicol}

\usepackage{xcolor}

\usepackage{hyperref}
\hypersetup{
  colorlinks,%
  citecolor=blue,%
    filecolor=red,%
    linkcolor=red,%
    urlcolor=blue
}

%
%
%

\newtheorem{theorem}{Theorem}[section]
\newtheorem{proposition}[theorem]{Proposition}
\newtheorem{lemma}[theorem]{Lemma}

\theoremstyle{definition}
\newtheorem{definition}[theorem]{Definition}

\theoremstyle{remark}

\numberwithin{equation}{section}

\newcommand{\abs}[1]{\left\vert#1\right\vert}
\newcommand{\set}[1]{\left\{#1\right\}}

\newcommand{\norm}[1]{\left\Vert#1\right\Vert}

\allowdisplaybreaks

%

\newcommand{\R}{\mathbb{R}}
\newcommand{\Z}{\mathbb{Z}}

\newcommand{\Hh}{\mathscr{H}}
\newcommand{\D}{\mathscr{D}}
%
%


\title[]{On the structure of the diffusion distance induced by the fractional dyadic Laplacian}
\author[]{Mar\'ia Florencia Acosta}
%
\author[]{Hugo Aimar}
%
\author[]{Ivana G\'{o}mez}
%
\author[]{Federico Morana}
%
\subjclass[2010]{Primary 54E35, 35K08}
\keywords{Diffusion metrics; Dyadic diffusion}
\linespread{1.3} 


\begin{document}

\maketitle

\begin{abstract}
	In this note we explore the structure of the diffusion metric of Coifman-Lafon determined by fractional dyadic Laplacians. The main result is that, for each ${t>0}$, the diffusion metric 
	is a function of the dyadic distance, given in $\R^+$ by $\delta(x,y) = \inf\set{\abs{I}\colon I \text{ is a dyadic interval containing } x \text{ and } y}$. Even if these functions of $\delta$ are not equivalent to $\delta$, the families of balls are the same, to wit, the dyadic intervals.
\end{abstract}

\section{Introduction}
Let $W_t(x)$ be the Weierstrass kernel in $\R^n$. The gaussian decay of $W_t(x)$ as a function of $x$ for $t>0$ fixed guarantees the convergence of the integral
$$d_t^2(x,y)=\int_{z\in\R^n}\big|W_t(x-z)-W_t(y-z)\big|^2\,dz.$$
Moreover, $d_t$ is a metric on $\R^n$ for each $t>0$. It is clear that different values of $t$ will produce a diversity of metrics. Nevertheless the family of all $d_{t_1}$-balls is the same as the family of the $d_{t_2}$-balls for any choice of $t_1$ and $t_2$. Furthermore, this unique class of balls coincide with the class of all Euclidean balls. Let us precise these remarks in the following statement.

\begin{proposition}\label{propo1}
	Let $d_t$ be defined as before. Then\begin{enumerate}[(a)]
		\item $d_t$ is translation invariant;
		\item $d_t(x,y)$ depends only on $\abs{x-y}$, i.e. $d_t(x,y)=\rho_t\left(\abs{x-y}\right)$;
		\item $\rho_t$ is strictly increasing and continuous, with $\rho_t(0)=0$;
		\item $\lim_{r\to 0^+}\frac{\rho_{t_1}(r)}{\rho_{t_2}(r)}=\bigl(\frac{t_2}{t_1}\bigr)^{\tfrac{n}{4}+\tfrac{1}{2}}$, $t_1$, $t_2>0$;
		\item the family of $d_t$-balls are the Euclidean balls.
	\end{enumerate}
\end{proposition}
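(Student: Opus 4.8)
The plan is to reduce all five items to a single closed-form expression for $d_t^2$, after which parts (a)--(e) become essentially bookkeeping. The computational engine is the semigroup (Chapman--Kolmogorov) identity $W_t*W_t=W_{2t}$ for the Weierstrass kernel, together with the fact that $W_t$ is even and radial. I would first record translation invariance (a) by the change of variables $z\mapsto z+h$ in the defining integral, which leaves it unchanged, so that $d_t(x+h,y+h)=d_t(x,y)$.

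Next, to obtain $d_t$ explicitly, I expand the square:
$$d_t^2(x,y)=\int_{\R^n}W_t(x-z)^2\,dz-2\int_{\R^n}W_t(x-z)W_t(y-z)\,dz+\int_{\R^n}W_t(y-z)^2\,dz.$$
By translation invariance of Lebesgue measure the two outer integrals both equal $\int_{\R^n}W_t(z)^2\,dz=(W_t*W_t)(0)=W_{2t}(0)$, while after the substitution $w=y-z$ and using that $W_t$ is even, the cross term is exactly $(W_t*W_t)(x-y)=W_{2t}(x-y)$. Hence
$$d_t^2(x,y)=2\bigl(W_{2t}(0)-W_{2t}(x-y)\bigr)=\frac{2}{(8\pi t)^{n/2}}\Bigl(1-e^{-\abs{x-y}^2/(8t)}\Bigr).$$
This single formula already delivers (b): the right-hand side depends on $x,y$ only through $\abs{x-y}$ because $W_{2t}$ is radial, so putting
$$\rho_t(r)=\left[\frac{2}{(8\pi t)^{n/2}}\bigl(1-e^{-r^2/(8t)}\bigr)\right]^{1/2}$$
gives $d_t(x,y)=\rho_t(\abs{x-y})$.

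With the closed form in hand, (c) is immediate: $\rho_t(0)=0$, and since $r\mapsto e^{-r^2/(8t)}$ is strictly decreasing and continuous on $[0,\infty)$, the map $1-e^{-r^2/(8t)}$ is strictly increasing and continuous, and the square root preserves both properties. For (d) I would use $1-e^{-r^2/(8t)}=\frac{r^2}{8t}+O(r^4)$ as $r\to 0^+$, which yields $\rho_t(r)\sim C\,t^{-(n/4+1/2)}\,r$ with $C=\tfrac12(8\pi)^{-n/4}$ independent of $t$; dividing the asymptotics for $t_1$ and $t_2$ produces the stated limit $\left(\tfrac{t_2}{t_1}\right)^{n/4+1/2}$.

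Finally, for (e), because $\rho_t$ is a strictly increasing continuous bijection from $[0,\infty)$ onto $[0,M_t)$ with $M_t=[2W_{2t}(0)]^{1/2}$, the inequality $d_t(x,y)<r$ is equivalent to $\abs{x-y}<\rho_t^{-1}(r)$ for every $0<r<M_t$. Thus each $d_t$-ball of radius $r<M_t$ is the Euclidean ball with the same centre and radius $\rho_t^{-1}(r)$, and conversely every Euclidean ball of finite radius $R$ is the $d_t$-ball of radius $\rho_t(R)$. The only point requiring a remark is that $\rho_t$ is bounded, so a $d_t$-ball of radius $r\ge M_t$ is all of $\R^n$; apart from this harmless degenerate case the two families coincide exactly. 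I do not expect a genuine obstacle here: the entire argument hinges on recognizing the cross term as a convolution evaluated by the semigroup law, and everything downstream is routine.
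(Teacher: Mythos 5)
Your proof is correct, but it takes a genuinely different route from the paper's. The paper never writes down a closed form for $d_t$: it proves (b) by a rotation-invariance argument (conjugating by a rotation $R$ inside the integral, using that $W_t$ is radial), proves (c) by differentiating the profile $\rho_t^2(r)$ under the integral sign and recognizing the result as $-2\,\tfrac{\partial}{\partial x_1}(W_t\ast W_t)(r\vec{e}_1)=-2\,\tfrac{\partial}{\partial x_1}W_{2t}(r\vec{e}_1)>0$, gets (d) from that derivative via l'H\^{o}pital, and dismisses (e) as a consequence of (c). You instead expand the square and apply the semigroup law $W_t\ast W_t=W_{2t}$ once, directly to the cross term, obtaining
\begin{equation*}
d_t^2(x,y)=2\bigl(W_{2t}(0)-W_{2t}(x-y)\bigr)=\frac{2}{(8\pi t)^{n/2}}\Bigl(1-e^{-\abs{x-y}^2/(8t)}\Bigr),
\end{equation*}
after which (b), (c), (d), (e) are read off by inspection; note that your formula is consistent with the paper's computation, since its $r$-derivative is exactly the paper's $\tfrac{4}{(8\pi t)^{n/2}}e^{-r^2/(8t)}\tfrac{r}{8t}$. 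Both arguments hinge on the same identity $W_t\ast W_t=W_{2t}$, but yours uses it at the level of the kernel rather than of its derivative, which makes the whole proposition transparent: no l'H\^{o}pital is needed for (d) (a first-order Taylor expansion of $1-e^{-r^2/(8t)}$ suffices), and (b) comes for free without a separate rotation argument. Your treatment of (e) is also more careful than the paper's one-line deduction: since $\rho_t$ is bounded with supremum $M_t=[2W_{2t}(0)]^{1/2}$, balls of radius $r\ge M_t$ equal all of $\R^n$, a degenerate case the paper silently ignores (and which reappears, explicitly handled this time, in the restriction $0<r<\psi_t(+\infty)$ of its main dyadic theorem). The only stylistic caveat is that you should state evenness of $W_t$ where you use it to identify the cross term as $(W_t\ast W_t)(x-y)$, which you do.
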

\begin{proof}
	To prove $(a)$ we only have to change variables $y-z=u$ in the integral defining $d_t^2(x,y)$. Hence $d_t(x,y)=d_t(x-y,0)$. To prove $(b)$ we have to show that the function of $x$, $d_t(x,0)$, is rotation invariant. Take a rotation $R$ of $\R^n$. Then, since $W_t$ is radial
	\begin{align*}
		d_t^2(Rx,0)&=\int_{z\in\R^n}\big|W_t(Rx-z)-W_t(z)\big|^2\,dz\\
		&=\int_{z\in\R^n}\big|W_t(R(x-R^{-1}z))-W_t(R^{-1}z)\big|^2\,dz\\
		&=\int_{u\in\R^n}\big|W_t(R(x-u))-W_t(u)\big|^2\,du\\
		&=d_t^2(x,0).
	\end{align*}
	In order to prove $(c)$, notice that since $d_t^2$ is a radial function of $x-y$, a formula for the profile $\rho_t^2(r)$ is given for $r>0$ by $\rho_t^2(r) = \int_{z\in\R^n}\big|W_t(r\vec{e_1}-z)-W_t(z)\big|^2\,dz$, with $\vec{e_1}$ the first vector of the canonical basis or $\R^n$ and $W_t(y)=(4\pi t)^{-n/2}e^{-\abs{y}^2/4t}$. For $t$ fixed, the derivative of $\rho_t^2$ as a function of $r>0$ is given by
\begin{align*}
	\frac{d\rho_t^2}{dr}(r) &= \frac{1}{(4\pi t)^n} \int\limits_{z\in\mathbb{R}^n} 2\left(e^{-\tfrac{\abs{r\vec{e}_1-z}^2}{4t}}-e^{-\tfrac{\abs{z}^2}{4t}}\right) \frac{(-2)(r-z_1)}{4t}\,e^{-\tfrac{\abs{r\vec{e}_1-z}^2}{4t}} \,dz\\
	&= \frac{-4}{(4\pi t)^n} \left[ \int\limits_{z\in\mathbb{R}^n}e^{-\frac{2\abs{r\vec{e}_1-z}^2}{4t}}\,\frac{(r-z_1)}{4t}\,dz - \int\limits_{z\in\mathbb{R}^n}e^{-\frac{\abs{z}^2}{4t}} e^{-\frac{\abs{r\vec{e}_1-z}^2}{4t}}\,\frac{(r-z_1)}{4t}\,dz \right]\\
	&= -2 \int\limits_{z\in\mathbb{R}^n} \frac{e^{-\tfrac{\abs{z}^2}{4t}}}{(4\pi t)^{-n/2}} \frac{e^{-\tfrac{\abs{r\vec{e}_1-z}^2}{4t}}}{(4\pi t)^{-n/2}}(-2)\frac{(r-z_1)}{4t}\,dz\\
	&= -2\left(W_t\ast\frac{\partial W_t}{\partial x_1}\right)(r\vec{e}_1)\\
	&= -2\,\frac{\partial}{\partial x_1}\big(W_t\ast W_t\big)(r\vec{e}_1)\\
	&= -2\,\frac{\partial}{\partial x_1}W_{2t}(r\vec{e}_1)\\
	&= -2\,\frac{\partial}{\partial x_1} \left(\frac{1}{(8\pi t)^{n/2}}e^{-\tfrac{\abs{x}^2}{8t}}\right)(r\vec{e}_1)\\
	&= \left(\frac{4}{(8\pi t)^{n/2}} e^{-\tfrac{\abs{x}^2}{8t}}\frac{x_1}{8t}\right)(r\vec{e}_1)\\
	&= \frac{4}{(8\pi t)^{n/2}}\,e^{-\tfrac{r^2}{8t}}\,\frac{r}{8t}\\
	&> 0\,,
\end{align*}
	where we have used the semigroup property of $W_t$, and \textit{(c)} is proved. Property \textit{(e)} is now a consequence of \textit{(c)}.	Let us finally check \textit{(d)}. The above computation of $\tfrac{d\rho_t^2}{dr}$ and l'H\^{o}pital rule gives
	\begin{equation*}
		\lim_{r\to 0^+}\frac{\rho_{t_1}^2(r)}{\rho_{t_2}^2(r)}=\lim_{r\to 0^+}\left(\frac{t_2}{t_1}\right)^{\tfrac{n}{2} +1} e^{\tfrac{r^2(t_1-t_2)}{8t_1 t_2}}=\left(\frac{t_2}{t_1}\right)^{\tfrac{n}{2}+1}
	\end{equation*}
and \textit{(d)} is proved.
\end{proof}

In more general structures where no geometric invariances are available the question of the structure of diffusive metrics becomes less simple and more interesting. In particular it could be important, after the lost of equivalence for different values of time $t$ of the associated metrics, to consider the stability of the family of balls. In other words, for $t_1\neq t_2$, is it true that for every $x$ and every $r_1>0$ there exists a positive $r_2$ such that $B_{d_{t_1}}(x,r_1)=B_{d_{t_2}}(x,r_2)$?

We solve this problem for the non-convolution case determined by the dyadic fractional diffusion whose spectral analysis is supplied by the Haar wavelets. For the sake of simplicity we shall work in $\R^+$, even when the whole analysis can be carried over quadrants in $\R^n$ and even over much more general 
structures with dyadic analysis.

In Section~2 we introduce the basic definitions and notation. Section~3 is devoted to state and prove our main result.

\section{Definitions and notation}
Let $\D$ be the family of all dyadic intervals in $\R^+=\set{x\geq0}$. Precisely $\D = \big\{I^j_k=[k2^{-j},(k+1)2^{-j})\colon j\in\mathbb{Z}, k\in\mathbb{N}_0\big\}$. With $\D^j = \big\{I^j_k\colon k\in\mathbb{N}_0\big\}$ we have that $\D=\bigcup_{j\in\Z}\D^j$.

Let $\Hh=\big\{h^j_k=2^{j/2}h^0_0(2^jx-k)\colon j\in\mathbb{Z},k\in\mathbb{N}_0 \big\}$ be the Haar wavelet system 
in $\mathbb{R}^+$, with $h^0_0=\chi_{[0,\frac{1}{2})}(x)-\chi_{[\frac{1}{2},1)}(x)$, where as usual $\chi_E$ denotes the indicator function of the set $E$.
The family $\Hh$ constitutes an orthonormal basis of $L^2\left(\mathbb{R}^+\right)$. 
Let $h_I$ denotes the Haar wavelet supported on the dyadic interval $I$, so for $I=I^j_k$ we have that $h_I=h^j_k$.
Let $I(h)$ denotes the dyadic interval that supports the wavelet $h\in\mathscr H$.

\begin{definition}
	The dyadic distance is defined by
	\begin{equation*}
		\delta(x,y) = \inf\set{\abs{I}\colon I \text{ is a dyadic interval containing } x \text{ and } y}.
	\end{equation*}
\end{definition}
Notice that if $x\neq y$ there exists a smallest dyadic interval containing $x$ and $y$, which we will denote by $I(x,y)$. Taking $I(x,x)=\set{x}$, we have that $\delta(x,y)=|I(x,y)|$ for every $x,y\in\R^+$.

The metric $\delta$ on $\R^+$ is not translation invariant and is an upper bound for the Euclidean. In fact $|x-y|\leq\delta(x,y)$. Of course, they are not equivalent. This means that $\delta(x,y)$ is in general much larger than $|x-y|$. Hence we could expect some better integrability properties of the powers of $\delta(x,y)$, 
locally and/or globally. Nevertheless, the behavior of the local and global integral properties of $\delta(x,y)$ are exactly the same as those of the powers of $|x-y|$. From a general point of view these properties are consequences of the fact that $\big(\R^+,\delta,m\big)$ is a normal or 1-Ahlfors regular space of homogeneous type (see \cite{MaSe79}) without atoms and with infinite total Lebesgue measure $m$. Then the integrals of $\delta^\alpha(x,y)$, $\alpha\in\R$, inside $B_\delta(x,r)$ and outside $B_\delta(x,r)$, for $r>0$, are exactly the same as the integrals of $|x-y|^\alpha$ inside and outside the corresponding Euclidean balls $(x-r,x+r)$. In particular, the local and global singularity is provided by $\delta(x,y)^{-1}=\frac{1}{\delta(x,y)}$. Hence, the natural fractional integrals or Riesz type operators of the setting are given by kernels of the form $\delta(x,y)^{-1+s}=\frac{1}{\delta(x,y)^{1-s}}$ for $s>0$. So that the natural fractional differential operators are defined by kernels of the form $\delta(x,y)^{-1-s}=\frac{1}{\delta(x,y)^{1+s}}$ for $0<s<1$. Of course, as in the Euclidean case, the strong local singularity of this kernel needs for some regularity of the functions in the domain of the operator. As proved in \cite{AiGo18} the indicator function of a dyadic interval belongs to the class of Lipschitz-1 functions with respect to $\delta$. In particular, the Haar wavelets in $\Hh$ are all smooth in this sense. Actually, for $f$ bounded and Lipschitz-$\sigma$ for $0<s<\sigma\leq1$ we have that
$$D^s_{dy}f(x)=\int_{\R^+}\frac{f(y)-f(x)}{\delta(x,y)^{1+s}}\,dy$$
is well defined. We call $D^s_{dy}f$ the dyadic fractional Laplacian of $f$ in $\R^+$.

The initial value problem
\begin{equation*}\label{pvi.dyfracdif}
	\left\{  \begin{array}{rl}
		\dfrac{\partial u}{\partial t}(x,t) \!\!\!&= D^s_{dy} u(x,t)\vspace{4pt}\\
		u(x,0) \!\!\!&= f(x)
	\end{array} \right.
\end{equation*}
was considered in \cite{AcAimCzech16} 
and, like in the Euclidean case, can be solved as an integral operator, which of course lacks the convolution structure. In fact
\begin{equation*}
	u(x,t) = \int_{\R^+} K_s(x,y;t)f(y)\,dy,
\end{equation*}
where
\begin{equation*}
	K_s(x,y;t) = \sum_{h\in\mathscr{H}} e^{-t|I(h)|^{-s}}h(x)h(y).
\end{equation*}

Following \cite{CoifmanLafon06} we may try to define a dyadic fractional diffusion type distance.
\begin{definition}
	For $t>0$ and $s>0$, the fractional dyadic diffusion distance of order $s$ at time $t$ is given by
	$$d_{t}(x,y)= \sqrt{\int_{z\in\R^+}\left|K_s(x,z;t)-K_s(y,z;t)\right|^2\,dy}.$$
\end{definition}

In the next results we will explore the analogous features of $d_t$ to those of the Euclidean case stated in Proposition~\ref{propo1}. 
First, we shall see the good definition and the metric character of $d_t$, and determine its spectral representation through the Haar wavelet system.

\begin{proposition}\label{propo2}
	Let $s>0$ and $t>0$ be given. Then $d_t$ is well defined, is a metric on $\R^+$ and can be computed as
	$$d_{t}(x,y)= \sqrt{\sum_{h\in\mathscr H}e^{-2t|I(h)|^{-s}}\left|h(x)-h(y)\right|^2}.$$
\end{proposition}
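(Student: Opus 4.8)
The plan is to recognize $d_t$ as the distance in $L^2(\R^+)$ between the images of $x$ and $y$ under the map $\Phi\colon\R^+\to L^2(\R^+)$, $\Phi(x)=K_s(x,\cdot;t)$, and to extract both the explicit formula and the metric axioms from the Haar expansion of $\Phi(x)$ in the second variable. First I would fix $x$ and compute the Haar coefficients of $z\mapsto K_s(x,z;t)$. Inserting the series defining $K_s$ and using orthonormality of $\mathscr{H}$ gives, at least formally, $\proin{K_s(x,\cdot;t)}{h}=e^{-t\abs{I(h)}^{-s}}h(x)$. The task is then to justify that this is a genuine $L^2$ identity, i.e.\ that $z\mapsto K_s(x,z;t)$ lies in $L^2(\R^+)$ with exactly these coefficients.

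The key observation for convergence is that, for fixed $x$, a wavelet $h=h^j_k$ satisfies $h(x)\neq0$ only when $x\in I^j_k$, and at each scale $j\in\Z$ there is exactly one such $k$, with $\abs{h(x)}^2=2^j$. Hence the Plancherel sum $\sum_{h}e^{-2t\abs{I(h)}^{-s}}\abs{h(x)}^2$ collapses to the single-variable series $\sum_{j\in\Z}2^j e^{-2t\,2^{js}}$, which converges: its terms behave like $2^j$ as $j\to-\infty$ and decay double-exponentially as $j\to+\infty$. This places $\Phi(x)$ in $L^2(\R^+)$ and legitimizes the coefficient computation. Applying Parseval to $\Phi(x)-\Phi(y)$, whose coefficients are $e^{-t\abs{I(h)}^{-s}}\bigl(h(x)-h(y)\bigr)$, then yields at once the finiteness of the defining integral (well-definedness) and the announced identity
$$d_t^2(x,y)=\int_{\R^+}\abs{K_s(x,z;t)-K_s(y,z;t)}^2\,dz=\sum_{h\in\mathscr{H}}e^{-2t\abs{I(h)}^{-s}}\abs{h(x)-h(y)}^2.$$

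It remains to verify the metric axioms. Symmetry and nonnegativity are immediate from the squared formula. For the triangle inequality I would not manipulate the series, but invoke the Hilbert-space structure directly: since $d_t(x,y)=\norm{\Phi(x)-\Phi(y)}_{L^2}$, the triangle inequality is inherited verbatim from that of the $L^2$ norm. For positivity when $x\neq y$ I would exhibit a single summand that is strictly positive: let $I(x,y)$ be the smallest dyadic interval containing both points, with $\delta(x,y)=\abs{I(x,y)}=2^{-j_0}$. By minimality $x$ and $y$ lie in the two distinct halves of $I(x,y)$, so the supported wavelet satisfies $h_{I(x,y)}(x)=-h_{I(x,y)}(y)=\pm2^{j_0/2}$ and contributes $4\cdot2^{j_0}e^{-2t\,2^{j_0 s}}>0$, forcing $d_t(x,y)>0$.

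The only genuinely technical point is the interchange of summation and integration needed to pin down the Haar coefficients of $K_s(x,\cdot;t)$; once the scalar series $\sum_{j}2^j e^{-2t\,2^{js}}$ is seen to converge, this is routine, and the triangle inequality — ordinarily the delicate axiom — is handed to us for free by the $L^2$ embedding. As a by-product, the same scale-by-scale bookkeeping shows that the summands vanish for scales coarser than $j_0$ while those at scales $j\geq j_0$ depend only on $j_0$; this already signals that $d_t(x,y)$ is a function of $\delta(x,y)$ alone, which is the theme of the main theorem.
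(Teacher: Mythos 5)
Your proof is correct and follows essentially the same route as the paper: interpret $d_t$ as the $L^2(\R^+)$ distance between $K_s(x,\cdot;t)$ and $K_s(y,\cdot;t)$ and apply Parseval's identity in the Haar basis. Yours is in fact slightly more complete, since the paper defers finiteness to the next results and dismisses the metric axioms as trivial, whereas you verify square-summability of the coefficients directly (via the collapse to $\sum_{j\in\Z}2^j e^{-2t2^{js}}<\infty$) and establish definiteness by exhibiting the strictly positive summand coming from $h_{I(x,y)}$.
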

\begin{proof}
	First, notice that the diffusion kernel $K_s(x,y;t)$ is well defined and finite for every $x,y\in\R^+$. Indeed, as $|h_I(w)|=|I|^{-\frac{1}{2}}\chi_{I}(w)$ 
	so $K_s(x,y;t)=\sum_{I\supseteq I(x,y)} e^{-t|I|^{-s}}h_I(x)h_I(y)$ whose absolute series is bounded above by
	$\sum_{I\supseteq I(x,y)} |I|^{-1} = \sum_{j\in\mathbb{N}_0} 2^{-j}|I(x,y)|^{-1} = 2\,|I(x,y)|^{-1} = 2\delta(x,y)^{-1}$. By definition, $d_t$ is the norm of the difference of the diffusion kernel at time $t$ centered at two points in consideration, so the metric properties follow trivially. As well, by Parseval's identity
	\begin{align*}
		d_t(x,y)^2 &= \norm{K_s(x,\cdot;t)-K_s(y,\cdot;t)}^2\\
		&= \norm{\sum_{h\in\mathscr H} e^{-t|I(h)|^{-s}}\left[h(x)-h(y)\right]h}^2\\
		&= \sum_{h\in\mathscr H}e^{-2t|I(h)|^{-s}}\left|h(x)-h(y)\right|^2.
	\end{align*}
	The finiteness of $d_t(x,y)$ will follow from the next results.
\end{proof}

\section{Main results}

\begin{lemma}\label{lemma1}
	Let $s>0$. For $t>0$ define $\psi_t(\lambda)= \sqrt{\frac{2}{\lambda}\eta_t\left(\lambda^{-s}\right)}$ with $\eta_t(\sigma)=2e^{-2t\sigma}+\sum_{\ell\geq1}2^\ell e^{-2t2^{s\ell}\sigma}$. 
	Then, when restricted to the sequence of integer powers of 2, $\{2^j\colon j\in\mathbb{Z}\}$, we have that
	\begin{enumerate}[label=(\alph*)]
		\item $\psi_t$ is strictly increasing;
		\item $\psi_t(0^+)=0$;
		\item ${\psi_t(+\infty) \simeq t^{-\frac{1}{2s}}}$.
	\end{enumerate}
\end{lemma}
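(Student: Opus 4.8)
The plan is to reduce all three items to the study of the single sequence $F(j):=\psi_t(2^j)^2$ and to exploit a reindexing that makes its structure transparent. Writing $\lambda=2^j$, so $\lambda^{-s}=2^{-sj}$, and expanding $\eta_t$, one gets $F(j)=2^{2-j}e^{-2t2^{-sj}}+2^{1-j}\sum_{\ell\geq1}2^\ell e^{-2t2^{s(\ell-j)}}$. Substituting $m=\ell-j$ in the sum and setting $a_m:=2^m e^{-2t2^{sm}}>0$, this collapses to
$$F(j)=4a_{-j}+2\sum_{m\geq 1-j}a_m.$$
The first thing I would record is that $\sum_{m\in\mathbb{Z}}a_m<\infty$: as $m\to+\infty$ the factor $e^{-2t2^{sm}}$ kills the growth of $2^m$, while as $m\to-\infty$ one has $a_m\sim 2^m\to0$. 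This convergence is exactly what guarantees the finiteness of $d_t$ left open after Proposition~\ref{propo2}, and it is the engine behind all three items.

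For (a) I would compute the discrete difference directly from the reindexed formula. Passing from $j$ to $j+1$ shifts the base point from $-j$ to $-j-1$ and enlarges the tail by the single term $a_{-j}$, so that $F(j+1)-F(j)=4a_{-j-1}-2a_{-j}$. Positivity of this quantity is equivalent to $2a_{m-1}>a_m$ for $m=-j$, and a one-line computation gives $2a_{m-1}/a_m=\exp\bigl(2t2^{sm}(1-2^{-s})\bigr)$, which exceeds $1$ precisely because $s>0$ forces $1-2^{-s}>0$. Hence $F$, and therefore $\psi_t$, is strictly increasing along the powers of $2$.

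Items (b) and (c) then follow by letting $j\to\mp\infty$ in the same formula. For (b), as $\lambda\to0^+$ we have $j\to-\infty$, so $-j\to+\infty$; then $a_{-j}\to0$ and $\sum_{m\geq1-j}a_m$ is a tail of a convergent series whose starting index tends to $+\infty$, so it vanishes, giving $F(j)\to0$ and $\psi_t(0^+)=0$. For (c), as $\lambda\to+\infty$ we have $j\to+\infty$; now $a_{-j}=2^{-j}e^{-2t2^{-sj}}\to0$ while the tail exhausts the whole line, so $F(j)\to 2G(t)$ with $G(t):=\sum_{m\in\mathbb{Z}}2^m e^{-2t2^{sm}}$. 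Thus $\psi_t(+\infty)=\sqrt{2G(t)}$, and the remaining point is to pin down the size of $G(t)$ in $t$.

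The main obstacle is precisely this last two-sided estimate. The clean way I would argue it is through the exact self-similarity of $G$: replacing $t$ by $2^s t$ and reindexing $m\mapsto m+1$ yields $G(2^s t)=\tfrac12 G(t)$. Consequently $P(t):=t^{1/s}G(t)$ satisfies $P(2^s t)=P(t)$, i.e.\ $P$ is periodic in $\log t$ with period $s\log2$. Since the series defining $G$ converges uniformly on compact subsets of $(0,\infty)$, $P$ is continuous and strictly positive, hence bounded above and below by positive constants; therefore $G(t)\simeq t^{-1/s}$ and $\psi_t(+\infty)\simeq t^{-1/(2s)}$, with implied constants depending only on $s$. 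As an alternative to the functional equation one could compare $G(t)$ with $\int_{\mathbb{R}}2^u e^{-2t2^{su}}\,du=\tfrac{1}{s\log2}(2t)^{-1/s}\Gamma(1/s)$, but the self-similarity route avoids estimating any sum-versus-integral error and delivers the comparison immediately.
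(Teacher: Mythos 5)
Your proof is correct, and for items (a) and (b) it is essentially the paper's own argument in different notation: your reindexed formula $F(j)=4a_{-j}+2\sum_{m\geq1-j}a_m$ is exactly twice the paper's auxiliary function $f$, your telescoping identity $F(j+1)-F(j)=4a_{-j-1}-2a_{-j}$ is the same cancellation the paper performs, and your ratio criterion $2a_{m-1}/a_m=\exp\bigl(2t2^{sm}(1-2^{-s})\bigr)>1$ is the paper's observation that $\xi^{2^{-s}}>\xi$ for $\xi=e^{-2t2^{-is}}\in(0,1)$, written multiplicatively instead of as a difference. (Your (b) is in fact slightly tidier, since you explicitly note that the tail $\sum_{m\geq 1-j}a_m$ vanishes because its starting index runs off to $+\infty$ in a convergent series, a point the paper leaves implicit.) The genuine divergence is in (c). The paper sandwiches $G(t)=\sum_{m\in\mathbb{Z}}2^m e^{-2t2^{sm}}$ between $\int_0^{+\infty}e^{-2tx^s}\,dx$ and $2\int_0^{+\infty}e^{-2tx^s}\,dx$ by comparing each term with the integral over $[2^{k-1},2^k]$, then rescales the integral to pull out the factor $t^{-1/s}$; this yields the explicit bounds $\sqrt2\,c_t(s)<\psi_t(+\infty)<2\,c_t(s)$ with $c_t(s)=t^{-\frac{1}{2s}}\bigl(\int_0^{+\infty}e^{-2x^s}\,dx\bigr)^{1/2}$. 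You instead exploit the exact self-similarity $G(2^s t)=\tfrac12 G(t)$, so that $P(t)=t^{1/s}G(t)$ is continuous, strictly positive and multiplicatively periodic, hence pinched between its extrema on the compact fundamental domain $[1,2^s]$. Both are complete proofs of $\psi_t(+\infty)\simeq t^{-\frac{1}{2s}}$ with constants depending only on $s$. The integral comparison buys clean explicit constants; your functional-equation route avoids all sum-versus-integral bookkeeping and exposes the scaling structure more sharply (it shows $t^{1/s}G(t)$ is log-periodic, not merely bounded), at the cost of constants that are only known to exist via compactness.
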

\begin{proof}
	Define, for $i\in\mathbb{Z}$,
	\begin{align*}
		f(i)&:= \frac{1}{2}\,\psi_t^2(2^i)\\
		&= 2^{1-i}e^{-2t2^{-is}} + \sum_{\ell\geq1} 2^{\ell-i} e^{-2t2^{-is}2^{\ell s}}\\
		&= 2^{1-i}e^{-2t2^{-is}} + \sum_{k=\ell-i\geq1-i} 2^{k} e^{-2t2^{ks}}.
	\end{align*}
	
	Then $$f(i+1)= 2^{-i}e^{-2t2^{-(i+1)s}} + \sum_{k\geq-i} 2^{k} e^{-2t2^{ks}}$$
	and so
	\begin{align*}
		f(i+1)-f(i)&= 2^{-i}e^{-2t2^{-(i+1)s}} - 2.2^{-i}e^{-2t2^{-is}} + 2^{-i} e^{-2t2^{-is}}\\
		&= 2^{-i}e^{-2t2^{-is}2^{-s}} - 2^{-i}e^{-2t2^{-is}}\\
		&= 2^{-i} \left[ \xi^{2^{-s}} - \xi \right]\\
		&>0
	\end{align*}
	because the function $\xi^x$ is monotone decreasing in the variable $x$ (since $\xi:=e^{-2t2^{-is}}$ is positive and less than one) and $2^{-s}<1$. 
	This shows that $\psi_t^2$ is an increasing function and therefore so is $\psi_t$, on account of its positivity. Thus $(a)$ is demonstrated.

	To check $(b)$ notice that
	$$\lim_{i\to-\infty} f(i) = \lim_{i\to-\infty} 2^{1-i}e^{-2t2^{-is}}
		= 2 \lim_{x\to+\infty} xe^{-2tx^{s}}
		= 0$$
	and  so $$\lim\limits_{i\to-\infty} \psi_t(2^i) = 0.$$
	
	In order to prove $(c)$ notice first that
	\begin{align*}
		\lim_{i\to+\infty} f(i) &= \lim_{i\to+\infty} 2^{1-i}e^{-2t2^{-is}} + \sum_{k\in\mathbb{Z}} 2^{k} e^{-2t2^{ks}}\\
		&= \sum_{k\in\mathbb{Z}} 2^{k} e^{-2t2^{ks}}\\
		&< \sum_{k\in\mathbb{Z}} 2\int_{2^{k-1}}^{2^k}e^{-2tx^{s}}\,dx\\
		&= 2\int_{0}^{+\infty}e^{-2tx^{s}}\,dx\\
		&< +\infty
	\end{align*}
	which implies that $\,\lim_{i\to+\infty} \psi_t(2^i)= \psi_t(+\infty)< +\infty$. 
	On the other hand,
	we attain a lower bound from
	$$\sum_{k\in\mathbb{Z}} 2^{k} e^{-2t2^{ks}}> \sum_{k\in\mathbb{Z}} \int_{2^{k-1}}^{2^k}e^{-2tx^{s}}\,dx= \int_{0}^{+\infty}e^{-2tx^{s}}\,dx.$$
	So, since $\psi_t(+\infty) = \sqrt{2\lim_{i\to+\infty} f(i)}\,$, we have that
	\begin{equation*}
		\sqrt{2}\;c_{t}(s) < \psi_t(+\infty) < 2\,c_{t}(s)
	\end{equation*}
	for $\,c_{t}(s) = \sqrt{\int_{0}^{+\infty}e^{-2tx^{s}}\,dx} = t^{-\frac{1}{2s}} \sqrt{\int_{0}^{+\infty}e^{-2x^{s}}\,dx}$.
\end{proof}
At this point it is important to remark that, in contrast with property \textit{(d)} in Proposition~\ref{propo1}, now for $0<t_1<t_2$ we have that $d_{t_2}(x,y)\leq d_{t_1}(x,y)$ for every $x$, $y\in\mathbb{R}^+$. On the other hand, there is no constant $C>0$ such that the inequality $d_{t_1}(x,y)\leq C d_{t_2}(x,y)$ holds for every $x$, $y\in\mathbb{R}^+$. In fact, both observations above follow from the fact that
\begin{equation*}
	\frac{d_{t_2}^2(x,y)}{d_{t_1}^2(x,y)}\leq e^{-2(t_2-t_1)\delta^{-s}(x,y)}
\end{equation*}
for every $x$ and $y$  in $\mathbb{R}^+$.

From Lemma~\ref{lemma1} we can deduce that the graph of $\psi_t$ is flatter as $t$ increases and, conversely, it reaches higher values at infinity as $t$ approaches zero. 

\begin{theorem}
	Let $d_t$ be the fractional dyadic diffusion metric of order $s>0$ at $t>0$.
	Let $\psi_t$ be as in Lemma~\ref{lemma1}, with $\psi_t(0):=0$. Then
	\begin{enumerate}
		\item $d_{t}(x,y)=\psi_t(\delta(x,y))$\label{eq:propo2} for 
		$x,y\in\R^+$;
		\item the family of $d_t$-balls, given as usual by $B_t(x,r)=\set{y\in\R^+\colon d_t(x,y)<r}$ for $x\in\R^+$ and $r>0$, coincides with $\D$, the family of all dyadic intervals.
	\end{enumerate}
\end{theorem}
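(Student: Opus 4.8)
The plan is to prove (1) by directly evaluating the Haar series representation of $d_t$ from Proposition~\ref{propo2}, and then to read off (2) from (1) using the monotonicity of $\psi_t$ established in Lemma~\ref{lemma1}. The elementary fact I will exploit throughout is the nestedness of dyadic intervals: any two members of $\D$ are either disjoint or one contains the other, and $\delta(x,y)$ takes values only in $\set{2^{j}\colon j\in\Z}$, which is precisely the set on which $\psi_t$ was analyzed in Lemma~\ref{lemma1}.

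For part (1), fix $x\neq y$, set $I_0=I(x,y)$ and $\delta=\delta(x,y)=\abs{I_0}$, and let $I_0^L,I_0^R$ be the two dyadic children of $I_0$; by minimality of $I_0$ the points lie in different children, say $x\in I_0^L$ and $y\in I_0^R$. I will classify every $h_I\in\Hh$ by the position of $I$ relative to $I_0$ and discard the vanishing terms of $\abs{h_I(x)-h_I(y)}^2$: if $I\supsetneq I_0$ then $I_0$, hence both points, sits inside a single half of $I$, so $h_I(x)=h_I(y)$ and the term vanishes; if $I$ is disjoint from $I_0$ then $x,y\notin I$ and the term vanishes; if $I=I_0$ the points sit in opposite halves, giving $\abs{h_{I_0}(x)-h_{I_0}(y)}^2=4/\delta$; and if $I\subsetneq I_0$ then $I$ lies in exactly one child of $I_0$ and hence contains at most one of the two points, contributing $\abs{I}^{-1}$ precisely when it contains that point. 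The subintervals of $I_0$ carrying a nonzero term are exactly the two chains of dyadic intervals of length $\delta\,2^{-\ell}$, $\ell\geq1$, containing $x$ and containing $y$ respectively, one interval per scale and per point. Collecting the surviving terms and inserting the weights $e^{-2t\abs{I}^{-s}}$ (with $\abs{I}^{-s}=\delta^{-s}2^{\ell s}$ at scale $\ell$), I expect to reach
\begin{equation*}
	d_t(x,y)^2=\frac{4}{\delta}\,e^{-2t\delta^{-s}}+\frac{2}{\delta}\sum_{\ell\geq1}2^{\ell}e^{-2t2^{\ell s}\delta^{-s}}=\frac{2}{\delta}\,\eta_t\!\left(\delta^{-s}\right)=\psi_t^2(\delta),
\end{equation*}
which is exactly (1); the convergence of the right-hand side, already contained in the estimates of Lemma~\ref{lemma1}, simultaneously settles the finiteness of $d_t$ left open in Proposition~\ref{propo2}. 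The case $x=y$ is covered by the convention $\psi_t(0):=0$.

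For part (2), I will use that $\psi_t$ is strictly increasing on $\set{2^{j}}$ together with the identity $\set{y\colon\delta(x,y)\leq 2^{j}}=I_j(x)$, where $I_j(x)$ denotes the unique dyadic interval of length $2^{j}$ containing $x$ (again a consequence of nestedness). Given $x$ and $r>0$, part (1) gives $B_t(x,r)=\set{y\colon\psi_t(\delta(x,y))<r}$; since $\delta$ is valued in powers of $2$ and $\psi_t$ is increasing there, the admissible values of $\delta(x,y)$ form an initial segment $\set{2^{j}\colon j\leq j^\ast}$ with $j^\ast=\max\set{j\colon\psi_t(2^{j})<r}$, whence $B_t(x,r)=I_{j^\ast}(x)\in\D$. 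Conversely, given any $I\in\D$ of length $2^{j_0}$, choosing $x\in I$ and any $r\in\bigl(\psi_t(2^{j_0}),\psi_t(2^{j_0+1})\bigr]$ forces $j^\ast=j_0$ and hence $B_t(x,r)=I$, the strict monotonicity guaranteeing that this radius window is nonempty. Thus the $d_t$-balls are exactly the dyadic intervals.

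The main obstacle I anticipate is the bookkeeping in part (1): one must verify that at each scale $\ell\geq1$ there is exactly one dyadic subinterval of $I_0$ containing $x$ (and one containing $y$), that these never simultaneously contain both points, and that the lengths and signs combine to yield precisely $4/\delta$ at scale $0$ and $2\cdot2^{\ell}/\delta$ at scale $\ell$, matching the definition of $\eta_t$. A secondary point deserving a remark is the degenerate regime $r\geq\psi_t(+\infty)$: there every term satisfies $\psi_t(\delta(x,y))<r$, so $B_t(x,r)=\R^+$, and the correspondence with $\D$ is to be understood for the proper balls $0<r<\psi_t(+\infty)$.
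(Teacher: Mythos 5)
Your proposal is correct and follows essentially the same route as the paper: part (1) by classifying the Haar wavelets relative to $I(x,y)$ (vanishing terms for $I\supsetneq I(x,y)$ and for disjoint $I$, the $4/\delta$ term at $I=I(x,y)$, and the two chains of subintervals containing exactly one point each), and part (2) by the monotonicity of $\psi_t$ on powers of $2$. Your treatment is in fact slightly more thorough than the paper's, since you explicitly verify the converse inclusion (every dyadic interval arises as a ball via the radius window $(\psi_t(2^{j_0}),\psi_t(2^{j_0+1})]$) and flag the degenerate regime $r\geq\psi_t(+\infty)$, both of which the paper leaves implicit.
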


\begin{proof}
	In order to prove $(1)$, let us use the representation formula for $d_t^2$ provided by Proposition~\ref{propo2}. For $x\neq y$,
	\begin{align*}
	d_{t}^2(x,y) &= \sum_{h\colon x\in I(h) \vee y\in I(h)} e^{-2t|I(h)|^{-s}}\left|h(x)-h(y)\right|^2\\
	&= e^{-2t|I(x,y)|^{-s}} \left|h_{I(x,y)}(x)-h_{I(x,y)}(y)\right|^2\\ &\qquad + \sum_{h\colon x\in I(h) \wedge y\notin I(h)} e^{-2t|I(h)|^{-s}}\left|h(x)\right|^2\\ &\qquad + \sum_{h\colon x\notin I(h) \wedge y\in I(h)} e^{-2t|I(h)|^{-s}}\left|h(y)\right|^2\\
	&= 4|I(x,y)|^{-1}e^{-2t|I(x,y)|^{-s}} + 2\sum_{\ell\geq1} e^{-2t\left(2^{-\ell}|I(x,y)|\right)^{-s}} \left(2^{-\ell}|I(x,y)|\right)^{-1}\\
	&= \frac{2}{|I(x,y)|} \left[ 2e^{-2t|I(x,y)|^{-s}} + \sum_{\ell\geq1} 2^\ell e^{-2t|I(x,y)|^{-s}2^{\ell s}} \right]\\
	&= \frac{2}{|I(x,y)|}\ \eta_t\left(|I(x,y)|^{-s}\right)\\
	&= \frac{2}{\delta(x,y)}\ \eta_t\left(\frac{1}{\delta(x,y)^{s}}\right)\\
	&= \psi_t^2\big(\delta(x,y)\big).
	\end{align*}
	Item $(2)$ follows readily from the fact that for $0<r<\psi_{t}(+\infty)$ we have
	\begin{equation*}
		B_t(x,r) = \set{y\in\R^+\colon \psi_t(\delta(x,y))<r} = I,
	\end{equation*}
	where $I$ is the largest dyadic interval containing $x$ for which $\psi_{t}(\abs{I})$ is less than $r$.
\end{proof}

\bigskip

\medskip
\section*{Acknowledgment}
This work was carried out at IMAL in Santa Fe, supported by CONICET and UNL, and by project funds awarded by the Agencia I+D+i of MINCYT in Argentina.

\bigskip
\noindent{\footnotesize
	\noindent\textit{Affiliation:\,}
	\textsc{Instituto de Matem\'{a}tica Aplicada del Litoral, UNL, CONICET.}
	
	\smallskip
	\noindent\textit{Address:\,} \textmd{CCT CONICET Santa Fe, Predio ``Alberto Cassano'', Colectora Ruta Nac.~168 km 0, Paraje El Pozo, S3007ABA Santa Fe, Argentina.}
	
	\smallskip
	\noindent\textit{E-mail addresses:\,}
	\verb|mfacosta@santafe-conicet.gov.ar|;
	\verb|haimar@santafe-conicet.gov.ar|;\\ \verb|ivanagomez@santafe-conicet.gov.ar|;
	\verb|fmorana@santafe-conicet.gov.ar|
}

\end{document}